\definecolor{TITLE}{rgb}{0.0,0.0,1.0}
\definecolor{AUTHOR1}{rgb}{0.00,0.59,0.00}
\definecolor{AUTHOR2}{rgb}{0.50,0.00,1.00}
\definecolor{SECTION}{rgb}{0.50,0.00,1.00}
\definecolor{FOOTTITLE}{rgb}{0.00,0.50,0.75}
\definecolor{THM}{rgb}{0.7,0.3,0.3}
\definecolor{SEC}{rgb}{0.6,0.1,.5}
\newtheorem{theorem}{{\color{THM} Theorem}}[section]
\let\oldthm\thetheorem
\def\thetheorem{{\color{THM}\oldthm}}
\newtheorem{lemma}[theorem]{{\color{THM}Lemma}}
\newtheorem{proposition}[theorem]{{\color{THM}Proposition}}
\newtheorem{corollary}[theorem]{{\color{THM}Corollary}}
\theoremstyle{definition}
\newtheorem{definition}[theorem]{{\color{THM}Definition\ }}
\numberwithin{equation}{section}
\newcommand{\A}{\mathfrak A}
\newcommand{\bea}{\begin{eqnarray*}}
\newcommand{\eea}{\end{eqnarray*}}
\numberwithin{equation}{section}
\begin{document}
\title[A Type Of GNS-Construction...]{{\color{TITLE} {\large\bf A TYPE OF GNS-CONSTRUCTION  FOR Banach ALGEBRAS} }}
\author[ Khodsiani , Rejali ]{\color{AUTHOR2} Bahram Khodsiani$^1$ and  Ali  Rejali$^2$ }
\address{$^{1,2}$ Department of Mathematics, University of Isfahan, Isfahan, IRAN.
}
\email{$^{1}$\textcolor[rgb]{0.00,0.00,0.84}{${\texttt {b}}_{-}$khodsiani@sci.ui.ac.ir}}
\email{$^{2}$\textcolor[rgb]{0.00,0.00,0.84}{rejali@sci.ui.ac.ir}}

\subjclass[2010]{43A10, 43A20, 46H20, 20M18.}

\keywords{{ Arens regularity, weak almost periodicity}}

\begin{abstract} We show that every Banach algebra $\mathfrak{A}$ admits a
representation on a  certain Banach space $E$. In particular,  any Banach algebra $\A$ contained in autoperiodic functionals on $\A$ such that separate the points of $\A$ could be imbedded in $B(E)$ for some reflexive  Banach space  $E$.
 \end{abstract}
\maketitle

\section{\color{SEC}Introduction and Preliminaries}
Let $\mathfrak{A}$ be a Banach algebra with identity.  Consider the mapping $\pi :\mathfrak{A}\rightarrow B(\mathfrak{A})$ by $\pi(a)=L_a$, where $L_a(x)=ax$. Then $\pi$ is an isometric representation on $\mathfrak{A}$, this means that every Banach algebra has  an isometric representation on itself, however $\mathfrak{A}$   may be  not reflexive.

 By the $GNS$-construction, every $C^*$-algebra admits an isometric
representation on  a Hilbert space, see ,\cite{GNS,Seg}.
N.J. Young\cite{y1} and S. Kaijser\cite{kas} showed that a Banach algebra has a faithful representation on a reflexive Banach space, if the weakly almost periodic functionals over $\mathfrak{A}$, denoted by $\mathcal{\mathcal{WAP}}(\mathfrak{A})$, separates the points of $\mathfrak{A}$.  Similarly, if the unit ball of $\mathcal{WAP}(\mathfrak{A})$ is a normming set for $\mathfrak{A}$, then it has an isometric representation on a reflexive Banach space. A Banach algebra $\mathfrak{A}$ is called weakly almost periodic algebra($WAP$-algebra) if it has a bounded below (by renormming isometric) representation on a reflexive Banach space.  This kind of algebras are a good generalization of Arens regular Banach algebras. The class  of $WAP$-algebra consist of dual Banach algebras, Arens regular (in particular, $C^*$-algebras), semi-simple algebras, group algebras.
All of this algebras are studied in Harmonic analysis.

In the following, we show that every Banach algebra $\mathfrak{A}$ admits a
representation on a  certain Banach space $E$. In
\cite{y1,palmer}, they specify $E$, as a reflexive Banach space,
whenever $\mathfrak{A}$ is $WAP$-algebra. Also refer to
\cite[Theorem4.10]{kas} for  the case $\mathfrak{A}$ is Arens
regular.

In \cite{Bkh}  we studied those conditions under which  $M_b(S,\omega)$ is a WAP-algebra (respectively  dual Banach algebra). In particular, $M_b(S)$ is a WAP-algebra (respectively  dual Banach algebra) if and  only if $wap(S)$ separates the points of $S$ (respectively $S$  is {\it compactly cancellative semigroup}).
     In \cite{Bkh1} for a locally compact foundation semigroup $S$, we showed that the absolutely continuous  semigroup measure algebra  $M_a(S)$   is a {\rm WAP}-algebra if and only if the measure algebra $M_b(S)$ is so.  We  also showed that  the second dual of a Banach algebra $\A$ is a {\rm WAP}-algebra, under each Arens products, if and only if $\A^{**}$ is a dual Banach algebra. This is  equivalent to the Arens regularity of $\A$.



\section{\color{SEC} Autocompact Sets and Autoperiodic Functionals}

Let $B$ be an absolutely convex subset of a real or complex vector space $E$.
We denote by $E_B$ the quotient space $(\cup_{\lambda>0}\lambda B)/(\cap_{\lambda>0}\lambda B)$. The gauge (or Minkowski functional) of $B$, which is a seminorm on $\cup_{\lambda>0}\lambda B$, induces a norm $|| .||_B$ on $E_B$.
We shall say that B is autocompact if the unit ball of $\widehat{E_B}$ is weakly compact,
where $\widehat {E_B}$ denotes the completion of $E_B$. The polar of $B$ is denoted by $B^\circ$ and defined by $$\{f\in E^*:|\langle f,x\rangle|\leq1\quad(x\in B)\}$$
The next Lemma is routine.
\begin{lemma}
Let $\A$  be a Banach algebra and let $a\in \A$,  $\lambda>0$ be a real number and  $C\subseteq \A^*$ be a $w^*$-closed and absolutly convex set with polar $C^{\circ}$. Then

\begin{enumerate}
\item $(\lambda C)^\circ=\frac{1}{\lambda}C^\circ$,
\item $a(Ca)^\circ\subseteq C^\circ$,
\item $Ca\subseteq \lambda C$ if and only if   $aC^\circ\subseteq\lambda C^\circ$.
\end{enumerate}
\end{lemma}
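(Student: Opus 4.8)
The plan is to dispatch (1) and (2) directly from the definition of the polar, and then to reduce (3) to a single symmetric inequality via the bipolar theorem. For (1) I would simply unwind the definitions: writing $\langle f,x\rangle$ for the canonical pairing, $x\in(\lambda C)^\circ$ means $|\langle \lambda f,x\rangle|\leq 1$ for every $f\in C$, i.e. $|\langle f,x\rangle|\leq 1/\lambda$, which is exactly $x\in\frac{1}{\lambda}C^\circ$; this is a one-line verification needing nothing beyond $\lambda>0$. For (2), recall that the relevant module action is $\langle fa,x\rangle=\langle f,ax\rangle$. Then $y\in(Ca)^\circ$ says precisely that $|\langle f,ay\rangle|\leq 1$ for all $f\in C$, and this is by definition the assertion that $ay\in C^\circ$; hence $a(Ca)^\circ\subseteq C^\circ$, again immediately.

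The substantive statement is (3), and here the key input is that $C$ is absolutely convex and $w^*$-closed, so the bipolar theorem applies and gives $C^{\circ\circ}=C$. Combined with (1) this yields the reformulation
\[
\lambda C=\{g\in\mathfrak{A}^*:\ |\langle g,w\rangle|\leq\lambda\ \text{ for all } w\in C^\circ\}.
\]
My plan is to show that each of the two conditions in (3) is equivalent to the single inequality
\[
|\langle f,aw\rangle|\leq\lambda\qquad(f\in C,\ w\in C^\circ).
\]
On one hand, using $\langle fa,w\rangle=\langle f,aw\rangle$ and the displayed description of $\lambda C$, the inclusion $Ca\subseteq\lambda C$ holds iff $|\langle fa,w\rangle|\leq\lambda$ for all $f\in C$ and $w\in C^\circ$, i.e. iff the inequality holds. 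On the other hand, $aC^\circ\subseteq\lambda C^\circ$ holds iff $aw\in\lambda C^\circ$ for every $w\in C^\circ$, which by the definition of $\lambda C^\circ$ is exactly $|\langle f,aw\rangle|\leq\lambda$ for all $f\in C$. Matching the two reformulations proves the equivalence. Alternatively, the forward implication can be obtained more quickly by taking polars in $Ca\subseteq\lambda C$, applying (1) to get $\frac{1}{\lambda}C^\circ\subseteq(Ca)^\circ$, and then hitting both sides with $a$ and invoking (2); the converse is the same argument run through the bipolar identity $C=C^{\circ\circ}$.

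The only real obstacle is the reverse implication in (3), and it is precisely here that the $w^*$-closedness of $C$ cannot be dropped: without it the bipolar theorem would only give $C^{\circ\circ}=\overline{C}^{\,w^*}$, so the description of $\lambda C$ as the set cut out by $C^\circ$ would fail and one direction of the equivalence could break. The remaining points to watch are purely bookkeeping, namely fixing the convention $\langle fa,x\rangle=\langle f,ax\rangle$ for the module action and keeping the scalar $\lambda$ on the correct side throughout.
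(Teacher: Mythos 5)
Your proof is correct, and for part (3) it takes a genuinely different route from the paper's. Parts (1) and (2) coincide with the paper's argument: both are the same direct unwindings of the definition of the polar together with the module convention $\langle fa,x\rangle=\langle f,ax\rangle$. For (3), the paper works entirely at the level of polar inclusions: it takes polars in $Ca\subseteq\lambda C$ to get $(\lambda C)^\circ\subseteq(Ca)^\circ$, rewrites the left-hand side by (1), then multiplies through by $a$ and invokes (2) to arrive at $aC^\circ\subseteq\lambda C^\circ$ --- which is exactly the ``alternative'' route you sketch at the end of your proposal. Your main argument instead reduces each of the two inclusions independently to the common bilinear inequality $|\langle f,aw\rangle|\leq\lambda$ for $f\in C$, $w\in C^\circ$, using the bipolar identity $C^{\circ\circ}=C$ to describe $\lambda C$ as the set cut out by $C^\circ$. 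This symmetric reduction is arguably cleaner than the paper's: the paper presents its chain as a string of biconditionals, but as displayed the step from $\frac{1}{\lambda}C^\circ\subseteq(Ca)^\circ$ to $aC^\circ\subseteq\lambda C^\circ$ via (2) is transparently an implication in one direction only, so the reverse implication of (3) is left implicit there (it secretly requires the bipolar theorem, just as the first biconditional in the paper's chain does). Your version makes both directions explicit, isolates exactly where $w^*$-closedness and absolute convexity enter --- a hypothesis the paper uses without comment --- and correctly observes the small economy that the side $aC^\circ\subseteq\lambda C^\circ$ of the equivalence needs no bipolar at all, only the definition of $C^\circ$.
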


\begin{lemma}\label{rr} Let $(E, E^*)$ be a dual pair of vector spaces and let $B$ be a $\sigma(E, E^*)$ closed absolutely convex subset of $E$. Then the following are equivalent:
\begin{enumerate}
  \item $B$ is autocompact;
  \item the completion of $E_B$ is reflexive;
  \item for all pairs of sequences $(x_n)$ in $B$ and $(y^*_m)$ in the polar $B^\circ$ of $B$ in $E^*$ the two repeated limits of the double sequence $(\langle x_n, y^*_m\rangle )$ are equal
whenever they both exist.
\end{enumerate}
\end{lemma}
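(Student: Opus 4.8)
The plan is to reduce everything to two classical pillars: Kakutani's characterization of reflexivity by weak compactness of the closed unit ball, and Grothendieck's iterated (double) limit criterion for weak compactness. Throughout I would write $X=\widehat{E_B}$ and note that the gauge $\|\cdot\|_B$ makes the image of $B$ the closed unit ball of $E_B$; since $B$ is absolutely convex and $\sigma(E,E^*)$-closed, the bipolar theorem gives $B=B^{\circ\circ}$, so $B$ really is the unit ball and each $y^*\in B^\circ\subseteq E^*$ restricts to a functional of norm $\le 1$ on $E_B$, hence induces an element of the closed unit ball $B_{X^*}$ of $X^*$. The first fact I would record is that, under this identification, $B^\circ$ is a weak$^*$-dense (in particular norming) subset of $B_{X^*}$: both $\overline{\operatorname{absconv}}^{\,w^*}(B^\circ)$ and $B_{X^*}$ are weak$^*$-closed absolutely convex sets whose polar in $E_B$ is $B$, so the bipolar theorem forces them to coincide.

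For $(1)\eq(2)$: by definition $B$ is autocompact exactly when the closed unit ball $B_X=\overline{B}$ of $X$ is $\sigma(X,X^*)$-compact, and Kakutani's theorem states that a Banach space is reflexive precisely when its closed unit ball is weakly compact. This equivalence is therefore immediate.

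For $(1)\eq(3)$: I would apply Grothendieck's double limit criterion to the bounded set $A=B$ inside $X$. It asserts that $B$ is relatively weakly compact if and only if, for every sequence $(x_n)$ in $B$ and every sequence $(f_m)$ in $B_{X^*}$, one has $\lim_n\lim_m\langle x_n,f_m\rangle=\lim_m\lim_n\langle x_n,f_m\rangle$ whenever both iterated limits exist. Since $B_X=\overline{B}$ is norm-closed and convex it is weakly closed, so relative weak compactness here coincides with weak compactness, i.e. with $(1)$. What remains is to see that the criterion may be tested on the subset $B^\circ$ in place of all of $B_{X^*}$, which is exactly the content of $(3)$.

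The main obstacle is precisely this last reduction. The implication $(1)\an(3)$ is free because $B^\circ\subseteq B_{X^*}$. For $(3)\an(1)$ I would exploit that $B^\circ$ is weak$^*$-dense and norming in $B_{X^*}$: the double limit criterion remains valid when the dual sequences are drawn from a norming subset, and one justifies this by first passing, via the Eberlein--\v{S}mulian theorem, to sequences and then controlling arbitrary $f_m\in B_{X^*}$ through their weak$^*$-approximations in $B^\circ$. The delicate point is that iterated limits are not automatically stable under weak$^*$-approximation, so the cleanest route is to invoke the norming-set version of the double limit theorem directly, or equivalently to run the argument through the canonical embedding $X\hookrightarrow C(K)$ with $K=\overline{B^\circ}^{\,w^*}$ compact, where pointwise convergence on $B^\circ$ already governs the weak topology of $X$ on the bounded set $\overline{B}$. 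Finally I would remark on the pleasant symmetry of $(3)$ in the variables $x_n\in B$ and $y_m^*\in B^\circ$: it characterizes simultaneously the weak compactness of $B_X$ and of $B_{X^*}$, which re-confirms why the equivalence passes through the reflexivity asserted in $(2)$.
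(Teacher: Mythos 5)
The paper itself gives no argument here---its ``proof'' is the citation \cite[Lemma 5]{y1}---so your proposal should be measured against Young's proof, and in substance it reconstructs exactly that classical argument: the identification of the closed unit ball of $X=\widehat{E_B}$ with the closure of the image of $B$ (your bipolar step $B=B^{\circ\circ}$, so that the gauge satisfies $p_B(\cdot)=\sup_{y^*\in B^\circ}|\langle\cdot,y^*\rangle|$ and $\{p_B\le1\}=B$ on the span), then Kakutani for $(1)\Leftrightarrow(2)$, and Grothendieck's double-limit criterion for $(1)\Leftrightarrow(3)$, where the only genuine content is the reduction from all of $B_{X^*}$ to the subset $B^\circ$. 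You isolate that reduction correctly and resolve it the right way: since $B^\circ$ is $1$-norming on $E_B$, the space $X$ embeds isometrically into $C(K)$ with $K=\overline{B^\circ}^{\,w^*}$, the image of $B^\circ$ is dense in $K$, and the dense-subset form of Grothendieck's crit\`ere de la double limite (the test points may be taken from any dense subset of $K$) yields relative weak compactness of the image of $B$; Mazur's theorem (weak closure of a convex set equals its norm closure) then makes $B_X$ itself weakly compact, which is $(1)$. Your easy directions --- $(1)\Rightarrow(3)$ by evaluating the two iterated numerical limits at a weak cluster point of $(x_n)$ and a weak$^*$ cluster point of $(y_m^*)$, and the observation that testing sequences from the dense set $B$ rather than all of $B_X$ suffices --- are also fine.

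Two repairs are needed, one cosmetic and one of substance in phrasing though not in structure. First, the bipolar identification $\overline{\mathrm{absconv}}^{\,w^*}(B^\circ)=B_{X^*}$ must be computed with polars taken in $X$, not ``in $E_B$'' as you wrote: extend the identity $\|x\|=\sup_{y^*\in B^\circ}|\langle x,y^*\rangle|$ from the dense subspace $E_B$ to all of $X$ by continuity, so that $(B^\circ)^\circ=B_X$ in the pair $(X^*,X)$, and then apply the bipolar theorem there. Second, your parenthetical claim that ``pointwise convergence on $B^\circ$ already governs the weak topology of $X$ on the bounded set $\overline{B}$'' is false as a statement about topologies: $B_{c_0}$ is a $1$-norming subset of $B_{\ell_\infty}$ for $\ell_1$, yet $(e_n)$ converges to $0$ pointwise on $c_0$ and not weakly in $\ell_1$. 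What is true, and what your argument actually uses, is only the sequential double-limit statement on the dense subset of $K$; since you explicitly invoke ``the norming-set version of the double limit theorem,'' the slip does not damage the proof, but the sentence should be deleted or replaced by the correct assertion that on bounded sets the double-limit condition against $B^\circ$ is equivalent to the double-limit condition against $K$, which is the content of Grothendieck's criterion rather than a coincidence of topologies.
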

\begin{proof}Refer to \cite[Lemma5]{y1}.
\end{proof}
Suppose that $\mathfrak{A}$ is a normed algebra and that the $w^*$-closed absolutely convex set $C \subseteq \mathfrak{A}^*$ has polar $C^\circ$ in $\mathfrak{A}$. Then the operation of multiplication on the left by $a \in \mathfrak{A}$ induces a linear mapping $a_C$ on $\hat{\mathfrak{A}_{C^\circ}}$ if and only if $\cap_{\lambda>0}\lambda C^\circ$ is invariant under $a$, or equivalently $Ca \subseteq \lambda C$ for some $\lambda>0$. When this is so we have (denoting the gauge of $C^\circ$ by $p_{C^\circ}$)

\begin{eqnarray*}
 ||a_C||&=& \sup\{p_{C^\circ}(ax):x\in C^\circ\} \\
   &=&\sup\{\inf\{\lambda: |\langle ax, y\rangle |\leq\lambda, \quad x\in  C^\circ\}:\quad \quad y\in C\quad\} \\
   &=&\sup\{\sup\{\lambda: |\langle x, ya\rangle |>\lambda, \quad x\in  C^\circ\}:\quad\quad\mbox{ for some  } \quad y\in C\}\\
    &=&\sup\{\lambda: |\langle x, ya\rangle |>\lambda, \quad\mbox{ for some  }\quad x\in  C^\circ\quad\mbox{and  } \quad y\in C\quad\} \\
   &=& \inf\{\lambda>0: |\langle x, ya\rangle| \leq\lambda\quad,\quad\mbox{ for some  } \quad x \in  C^\circ\quad\mbox{ and}\quad y\in C\quad\} \\
   &=&\inf\{\lambda>0: Ca\subseteq \lambda C\}.
\end{eqnarray*}

Consider now a normed algebra $\mathfrak{A}$. We shall say that $h \in \mathfrak{A}^*$  is right-autoperiodic on $\mathfrak{A}$ if the $\sigma(\mathfrak{A}^*,\mathfrak{A})$-closure of $o_r(h):=\{h.a: || a ||_\mathfrak{A}\leq  1\}$ is autocompact in $\mathfrak{A}^*$. Likewise $h$ is left-autoperiodic if the $\sigma(\mathfrak{A}^*,\mathfrak{A})$-closure of $o_l(h):=\{a.h: || a ||_\mathfrak{A}\leq  1\}$
is autocompact. We call h autoperiodic if it is either right- or left-autoperiodic. The set of all autoperiodic functionals are denoted  by $O(\mathfrak{A})$. Every autoperiodic functional h is weakly almost periodic, for
$$\{h.a: || a ||_{\mathfrak{A}}\leq1\}$$
is strongly bounded in $\mathfrak{A}^*$ and its $\sigma(\mathfrak{A}^*,\mathfrak{A})$-closure is $\sigma(\mathfrak{A}^*,\mathfrak{A})$-compact and
therefore completing, so that it is weakly compact. For $h\in O(\A)$ and $\lambda\in \mathbb{C}$, $\lambda h\in O(\A)$. It is not clear whether the sum of two right autoperiodic functionals is right autoperiodic.

\begin{definition}
 Let $\mathfrak{A}$ be a Banach algebra and $0\not= h\in \mathfrak{ A}^*$.Then  $\mathfrak{A}^*$ is a Banach  $\mathfrak{A}$- module in
canonical fashion.
\begin{enumerate}
    \item [(i)]$\|x\|_{l,h}:=\|x.h\|$, for all $x\in A$.
    \item [(ii)]$\|x\|_{r,h}:=\|h.x\|$, for all $x\in A$.
    \item [(iii)] $B\subseteq \mathfrak{A}$ is $h$-left bounded, if $(B,\|.\|_{l,h})$ is bounded.
    \item [(iv)]$B\subseteq \mathfrak{A}$ is $h$-right bounded, if $(B,\|.\|_{r,h})$ is bounded.
    \item[(v)]$B\subseteq \mathfrak{A}$ is $h$- bounded if $B$ is $h$-left and right bounded .
\end{enumerate}
\end{definition}

\begin{lemma}

Let $\mathfrak{A}$ be a Banach algebra and $0\not= h\in \mathfrak{A}^*$.
Then
\begin{enumerate}
    \item [(i)]The maps $x\mapsto \|x\|_{l,h}$  and $x\mapsto \|x\|_{r,h}$ are semi-norms on $\mathfrak{A}$.
    \item [(ii)]If $N_{l,h}:=\{x\in \mathfrak{A}:\|x\|_{l,h}=0\}$, then $N_{l,h}$ is a closed left ideal in $\mathfrak{A}$.
    \item [(iii)] If $N_{r,h}:=\{x\in \mathfrak{A}:\|x\|_{r,h}=0\}$, then $N_{r,h}$ is a closed right ideal in $\mathfrak{A}$.
    \item [(iv)]$\|x+N_{l,h}\|:=\|x\|_{l,h}$ defines a norm   on $\mathfrak{A}/{N_{l,h}}$.
    \item[(v)]$\|x+N_{r,h}\|:=\|x\|_{r,h}$ defines a norm   on $\mathfrak{A}/{N_{r,h}}$.
\end{enumerate}
\end{lemma}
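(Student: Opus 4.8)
The plan is to derive all five assertions from two structural facts about the canonical bimodule $\mathfrak{A}^*$: the module actions are \emph{bilinear and contractive}, so that $x \mapsto x.h$ and $x \mapsto h.x$ are bounded linear maps with $\|x.h\| \le \|h\|\,\|x\|$ and $\|h.x\| \le \|h\|\,\|x\|$; and they are \emph{associative against the algebra product}, in the sense that $(ax).h = a.(x.h)$ and $h.(xa) = (h.x).a$ for all $a, x \in \mathfrak{A}$. I would prove the ``left'' statements $(i)$, $(ii)$, $(iv)$ and then obtain $(iii)$, $(v)$ by the symmetric argument with the roles of the two actions interchanged.

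For $(i)$, since $x \mapsto x.h$ is linear (in the first variable of the module action), $\|x\|_{l,h} = \|x.h\|$ is the composition of a linear map into $\mathfrak{A}^*$ with the norm of $\mathfrak{A}^*$. Hence $\|\lambda x\|_{l,h} = |\lambda|\,\|x\|_{l,h}$ and $\|x+y\|_{l,h} \le \|x\|_{l,h} + \|y\|_{l,h}$ follow at once from the homogeneity and triangle inequality of $\|\cdot\|$ on $\mathfrak{A}^*$, so $\|\cdot\|_{l,h}$ is a seminorm; the same computation with $h.x$ settles $\|\cdot\|_{r,h}$. It is only a seminorm, not a norm, because $x.h = 0$ may occur for $x \ne 0$.

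For $(ii)$, $N_{l,h}$ is a linear subspace as the kernel of the seminorm from $(i)$, and it is closed because $\|\cdot\|_{l,h}$ is $\|\cdot\|$-continuous: the contractivity bound $\|x\|_{l,h} \le \|h\|\,\|x\|$ makes it a bounded seminorm, so $N_{l,h} = \{\|\cdot\|_{l,h} = 0\}$ is the preimage of a closed set under a continuous map. To see it is a \emph{left} ideal, take $x \in N_{l,h}$ and $a \in \mathfrak{A}$; then $(ax).h = a.(x.h) = a.0 = 0$, so $ax \in N_{l,h}$. Statement $(iii)$ is identical after replacing this with $h.(xa) = (h.x).a = 0$, which yields a \emph{right} ideal.

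It remains to carry out the standard passage from a seminorm to a norm on the quotient by its kernel, giving $(iv)$ and $(v)$. I would first check that $\|x + N_{l,h}\| := \|x\|_{l,h}$ is well defined: if $x - y \in N_{l,h}$, then the reverse triangle inequality gives $\bigl|\,\|x\|_{l,h} - \|y\|_{l,h}\,\bigr| \le \|x-y\|_{l,h} = 0$, so the value does not depend on the chosen representative. The seminorm axioms descend verbatim to $\mathfrak{A}/N_{l,h}$, and definiteness now holds, since $\|x + N_{l,h}\| = 0$ forces $x \in N_{l,h}$, i.e. $x + N_{l,h}$ is the zero coset. I do not expect a genuine obstacle: the lemma is essentially bookkeeping, and the only place demanding care is tracking the sidedness of the associativity identities $(ax).h = a.(x.h)$ and $h.(xa) = (h.x).a$, which is exactly what forces $N_{l,h}$ to be a left ideal and $N_{r,h}$ a right ideal.
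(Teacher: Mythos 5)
Your proof is correct and follows essentially the same route as the paper's: a direct verification that $\|\cdot\|_{l,h}$ and $\|\cdot\|_{r,h}$ are seminorms, that their kernels are one-sided ideals (you via the module identity $(ax).h = a.(x.h)$, the paper via the equivalent duality estimate $\|ax\|_{l,h}\le\|a\|\,\|x\|_{l,h}$), followed by the standard passage to the quotient norm. If anything, your write-up is slightly more complete than the paper's, which never explicitly verifies the closedness of $N_{l,h}$ or the well-definedness of the quotient norm, both of which you handle via continuity of the seminorm and the reverse triangle inequality.
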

\begin{proof}
�(i)�For all �$x,y\in \A$ and � �$\lambda\in \mathbb{C}$,�    �$\|x+y\|_{l,h}\leq\|x\|_{l,h}+\|y\|_{l,h}$ and � �$\|\lambda x\|_{l,h}=|\lambda|.\|x\|_{l,h}$. �So $\|.\|_{l,h}$�  is a seminorm on �$\A$. Similarly $\|.\|_{r,h}$�is a seminorm.

�(ii) $N_{l,h}:=\{x\in \A:\|x\|_{l,h}=0\}$�is a linear space. Since for �$x,y\in N_{l,h}$ and $\lambda\in\mathbb{C}$� �$\|x+\lambda y\|_{l,h}\leq\|x\|_{l,h}+|\lambda|\|y\|_{l,h}=0$. Thus � �$x+\lambda y\in N_{l,h}$.� If �$a\in \A$� and   �$x\in N_{l,h}$ then �
�\begin{eqnarray*}�
�\|ax\|_{l,h}=\|ax.h\|&=&\sup\{\langle y,ax.h\rangle:y\in\A�, �||y||\leq1\}\\�
�&=&\sup\{\langle y.a,x.h\rangle:y\in\A�, �||y.a||\leq||a||\}\\�
�&\leq&||a||.||x.h||=0�
�\end{eqnarray*}�
so �$ax\in N_{l,h}$.�

�(iii) is similar to� (ii).�

�(iv) $\|x+N_{l,h}\|:=\|x\|_{l,h}=0$� implies  �$x+N_{l,h}=N_{l,h}$.��So $\|x+N_{l,h}\|:=\|x\|_{l,h}$�defines a norm on �$\A/{N_{l,h}}$.�

�(v)� is similar to �(iv).

\end{proof}
\begin{definition}
Let $\mathfrak{A}$ be a Banach algebra and $0\not= h\in \mathfrak{A}^*$.
Then $\mathfrak{A}_{l,h}:=\overline{(\mathfrak{A}/{N_{l,h}})}$is called
left complemented of $\mathfrak{A}/{N_{l,h}}$ with respect to $h\in
\mathfrak{A}^*$, by  semi-norm $\|.\|_{l,h}$. Also $\mathfrak{A}_{r,h}:=\overline{(\mathfrak{A}/{N_{r,h}})}$is called right
complemented of $\mathfrak{A}/{N_{r,h}}$ with respect to $h\in \mathfrak{A}^*$, by semi-norm $\|.\|_{r,h}$.
\end{definition}
\begin{lemma}\label{zxc}

Let $\mathfrak{A}$ be a Banach algebra and $0\not= h\in \mathfrak{A}^*$.
Then
\begin{enumerate}
    \item [(i)]The map  $\pi_{l,h}:\mathfrak{A}\rightarrow B(\mathfrak{A}_{l,h}),\quad a\mapsto a_{l,h},\quad
    a_{l,h}(x+N_{l,h})=ax+N_{l,h}$ is a continuous representation
    of $\mathfrak{A}$.
    \item [(ii)] The map $\pi_{r,h}:\mathfrak{A}\rightarrow B(\mathfrak{A}_{r,h}),\quad a\mapsto a_{r,h},\quad
    a_{r,h}(x+N_{r,h})=ax+N_{r,h}$ is a continuous anti-representation
    of $\mathfrak{A}$.
    \item [(iii)] If $\mathfrak{A}_l:=\oplus_{h\in \mathfrak{A}^*} \mathfrak{A}_{l,h}$, then $\mathfrak{A}_h$ with  point wise multiplication
    and the norm
     $$\|(a_h)\|_l=(\sum_{h\in \mathfrak{A}^*}\|a_h\|^2_{l,h})^{\frac{1}{2}}$$ is a Banach space.
    \item [(iv)]If $\mathfrak{A}_r:=\oplus_{h\in \mathfrak{A}^*} \mathfrak{A}_{r,h}$, then $\mathfrak{A}_r$ with point wise multiplication
    and the norm
     $$\|(a_h)\|_r=(\sum_{h\in \mathfrak{A}^*}\|a_h\|^2_{r,h})^{\frac{1}{2}}$$ is a Banach space.
    \item[(v)]The map $\pi_l:\mathfrak{A}\rightarrow B(\mathfrak{A}_l),\quad a\mapsto (a_{l,h}),\quad
    a_{l,h}(x+N_{l,h})=ax+N_{l,h}$ is a left universal representation
    of $\mathfrak{A}$.
    \item[(vi)]The map $\pi_r:\mathfrak{A}\rightarrow B(\mathfrak{A}_r),\quad a\mapsto (a_{r,h}),\quad
    a_{r,h}(x+N_{r,h})=ax+N_{r,h}$ is a right universal anti-representation
    of $\mathfrak{A}$.
 \end{enumerate}\end{lemma}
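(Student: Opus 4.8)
The plan is to establish (i) and (ii) first, since the remaining four parts are assembled from them. For (i), I would begin by checking that $a_{l,h}$ is well defined on the quotient $\mathfrak{A}/N_{l,h}$: the preceding lemma shows $N_{l,h}$ is a closed left ideal, so $x-x'\in N_{l,h}$ forces $a(x-x')\in N_{l,h}$, and the value $ax+N_{l,h}$ is independent of the representative. The crux is then the norm estimate. Writing the action of $\mathfrak{A}$ on $\mathfrak{A}^*$ multiplicatively, associativity gives $(ax)\cdot h=a\cdot(x\cdot h)$, and since the module action is contractive we obtain $\|ax\|_{l,h}=\|(ax)\cdot h\|=\|a\cdot(x\cdot h)\|\le\|a\|\,\|x\cdot h\|=\|a\|\,\|x\|_{l,h}$. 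Hence $a_{l,h}$ is bounded by $\|a\|$ on the dense subspace $\mathfrak{A}/N_{l,h}$, so it extends uniquely to a bounded operator on the completion $\mathfrak{A}_{l,h}$ with $\|\pi_{l,h}(a)\|\le\|a\|$; in particular $\pi_{l,h}$ is continuous. Linearity in $a$ and the identity $(ab)_{l,h}=a_{l,h}b_{l,h}$ then follow by evaluating both sides on $x+N_{l,h}$ and using associativity in $\mathfrak{A}$.

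Part (ii) is the right-handed mirror of (i), and the single point requiring care is the order of multiplication. To produce a genuine \emph{anti}-homomorphism one works modulo the closed right ideal $N_{r,h}$ and lets $a$ act by right translation $x+N_{r,h}\mapsto xa+N_{r,h}$ (rather than on the left); well-definedness then uses that $N_{r,h}$ is a right ideal, the estimate $\|xa\|_{r,h}\le\|a\|\,\|x\|_{r,h}$ is obtained exactly as in (i) with the two one-sided actions interchanged, and associativity now delivers the \emph{reversed} identity $(ab)_{r,h}=b_{r,h}a_{r,h}$.

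For (iii) and (iv) the content is purely functional-analytic: the object is the $\ell^2$-direct sum of the Banach spaces $\mathfrak{A}_{l,h}$ (respectively $\mathfrak{A}_{r,h}$), and I would prove it is a Banach space by the standard argument. Each summand is complete, being a completion; a Cauchy sequence in the $\ell^2$-sum is coordinatewise Cauchy, hence has a coordinatewise limit, and an $\varepsilon$-estimate over arbitrary finite sets of indices shows this limit lies in the $\ell^2$-sum and is the norm limit. The only bookkeeping subtlety is that the index set $\mathfrak{A}^*$ is uncountable, which I would handle by noting every element of the sum has countable support and reading $\sum_{h}$ as the supremum of its finite partial sums.

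Finally, (v) and (vi) assemble the pieces as $\pi_l=\bigoplus_h\pi_{l,h}$ acting coordinatewise. The uniform bound $\|\pi_{l,h}(a)\|\le\|a\|$ from (i) gives $\sum_h\|a_{l,h}x_h\|^2\le\|a\|^2\sum_h\|x_h\|^2$, so $\pi_l(a)\in B(\mathfrak{A}_l)$ with $\|\pi_l(a)\|\le\|a\|$, and the homomorphism property passes through each coordinate. The word \emph{universal} refers to summing over \emph{all} $h\in\mathfrak{A}^*$; since $\mathfrak{A}^*$ separates the points of $\mathfrak{A}$ one checks faithfulness: $\pi_l(a)=0$ forces $(ax)\cdot h=0$ for every $x$ and every $h$, whence $yax=0$ for all $x,y$ and therefore $a=0$ when $\mathfrak{A}$ is unital. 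Part (vi) is the analogous anti-representation. I expect the main obstacle to be precisely the order-of-multiplication bookkeeping in the anti-representation parts (ii) and (vi), together with pinning down the exact sense in which $\pi_l$ and $\pi_r$ are ``universal''.
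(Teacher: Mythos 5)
Your proposal is correct, and for parts (i) and (ii) it follows the same direct-verification route as the paper, but in several places you supply substance the paper omits, so a comparison is worthwhile. For the norm bound in (i) the paper writes the chain $\sup\{\|ax+N_{l,h}\|:\|x+N_{l,h}\|\le 1\}\le\|a+N_{l,h}\|\le\|a\|$, whose intermediate step is not justified as stated; your derivation $\|ax\|_{l,h}=\|(ax).h\|=\|a.(x.h)\|\le\|a\|\,\|x.h\|$ via associativity of the dual module action is the clean way to obtain $\|\pi_{l,h}(a)\|\le\|a\|$. In (ii) you correctly observe that the displayed formula $a_{r,h}(x+N_{r,h})=ax+N_{r,h}$ cannot yield an anti-representation and that $a$ must act by right translation $x\mapsto xa+N_{r,h}$; this agrees with the paper's own computation, which silently switches to $x(ab)+N_{r,h}$, so you have in effect repaired a typo in the statement. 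For (iii) and (iv) the two proofs verify different things: the paper only checks that the $\ell^2$-norm is submultiplicative under coordinatewise products and never addresses completeness, whereas you prove completeness (coordinatewise Cauchy limits, countable support, finite-subset estimates), which is the literal assertion ``is a Banach space,'' and omit the submultiplicativity check; strictly one would want both, though the coordinatewise product is itself problematic since $N_{l,h}$ is only a one-sided ideal, so the quotients need not be algebras --- a defect of the statement rather than of your argument. For (v) and (vi) the paper's entire proof is the sentence that the universal representation is the direct sum over all $h\in\mathfrak{A}^*$; your verification that the uniform bound $\|a_{l,h}\|\le\|a\|$ makes $\pi_l(a)$ a bounded operator on the $\ell^2$-sum, with the homomorphism property passing through coordinates, is exactly the missing substance, and your hedged faithfulness remark (that $h(yax)=0$ for all $h,x,y$ yields $a=0$ only in the unital case) is appropriately cautious, since the paper never defines ``universal.''
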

 �\begin{proof}�
�(i)   $\pi_{l,h}$� is a continuous homomorphism. Since
�\begin{eqnarray*}�
�\pi_{l,h}(ab)(x+N_{l,h})&=&(ab)_{l,h}(x+N_{l,h})\\�
�&=&(ab)x+N_{l,h}=(a_{l,h})(bx+N_{l,h})\\�
�&=&\pi_{l,h}(a)\circ(\pi_{l,h}(b)(x+N_{l,h})�
�\end{eqnarray*}�
Also
�\begin{eqnarray*}�
�||\pi_{l,h}(a)||&=&\sup\{||\pi_{l,h}(a)(x+N_{l,h})||:||x+N_{l,h}||\leq1\}\\�
�&=&\sup\{||ax+N_{l,h}||:||x+N_{l,h}||\leq1\}\\�
�&\leq&||a+N_{l,h}||\leq||a||�
�\end{eqnarray*}�

�(ii)  $\pi_{r,h}$� is a continuous anti-homomorphism. Since
�\begin{eqnarray*}�
�\pi_{r,h}(ab)(x+N_{r,h})&=&(ab)_{r,h}(x+N_{r,h})\\�
�&=&x(ab)+N_{r,h}=(b_{r,h})(ax+N_{r,h})\\�
�&=&\pi_{r,h}(b)\circ(\pi_{r,h}(a)(x+N_{r,h})�
�\end{eqnarray*}�
Also
�\begin{eqnarray*}�
�||\pi_{r,h}(a)||&=&\sup\{||\pi_{r,h}(a)(x+N_{r,h})||:||x+N_{r,h}||\leq1\}\\�
�&=&\sup\{||xa+N_{r,h}||:||x+N_{r,h}||\leq1\}\\�
�&\leq&||a+N_{r,h}||\leq||a||�
�\end{eqnarray*}�
�(iii)�  �$\|(a_h)\|_l=(\sum_{h\in \A^*}\|a_h\|^2_{l,h})^{\frac{1}{2}}$� is an algebraic  norm. Since
�\begin{eqnarray*}
�\|(a_h)(b_h)\|_l&=&(\sum_{h\in \A^*}\|a_hb_h\|^2_{l,h})^{\frac{1}{2}}\\�
�&\leq&(\sum_{h\in \A^*}\|a_h\|^2_{l,h})^{\frac{1}{2}}(\sum_{h\in \A^*}\|b_h\|^2_{l,h})^{\frac{1}{2}}\\�
�&\leq&\|(a_h)\|_l\|(b_h)\|_l�
�\end{eqnarray*}�

�(iv)�is similar to�(iii).

�(v)   A universal representation is direct sum over all $h\in \A^*$.

�(vi)is similar to�(v).
�\end{proof}�

\begin{lemma}\label{autoperiodic} The following are equivalent for a continuous linear functional
$h$ on a normed algebra $\mathfrak{A}$:
\begin{enumerate}
  \item $h$ is right-autoperiodic;
  \item  $\mathfrak{A}_{r,h}$ is reflexive;
  \item for every bounded sequence $(x_n)$ and left-$h$-bounded sequence $(y_m)$ in $\mathfrak{A}$ the two repeated limits of the double sequence $(\langle x_ny_m, h\rangle )$ are equal provided they both exist.
\end{enumerate}
\end{lemma}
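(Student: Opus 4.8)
The plan is to apply Lemma~\ref{rr} to the single absolutely convex, $\sigma(\mathfrak A^*,\mathfrak A)$-closed set $B:=\overline{o_r(h)}^{\,\sigma(\mathfrak A^*,\mathfrak A)}$, and to read off its three equivalent conditions as the three assertions to be proved. By the very definition of autoperiodicity, $h$ is right-autoperiodic exactly when $B$ is autocompact, so condition~(1) here is condition~(1) of Lemma~\ref{rr} for this $B$. It therefore remains to match Lemma~\ref{rr}(2) with reflexivity of $\mathfrak A_{r,h}$ and Lemma~\ref{rr}(3) with the repeated-limit condition~(3).

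For the first matching I would use the bounded operator $T\colon\mathfrak A\to\mathfrak A^*$, $Tx=h\cdot x$. Its kernel is exactly $N_{r,h}$ and $\|Tx\|=\|x\|_{r,h}$, so $T$ induces an isometric injection of $(\mathfrak A/N_{r,h},\|\cdot\|_{r,h})$ into $\mathfrak A^*$ that carries the closed unit ball onto $o_r(h)$, whose $\sigma(\mathfrak A^*,\mathfrak A)$-closure is $B$. Passing to completions identifies $\mathfrak A_{r,h}$ with the completion $\widehat{E_B}$, so that Lemma~\ref{rr}(2) becomes statement~(2). The one delicate point is to verify that the gauge $p_B$ of the $w^*$-closed orbit agrees, up to equivalence of norms, with $\|\cdot\|_{r,h}$ on the range of $T$; I would establish this by computing $p_B(h\cdot x)$ directly from the description $o_r(h)=\{h\cdot a:\|a\|\le1\}$ and invoking $w^*$-lower semicontinuity of the norm together with absolute convexity and $w^*$-closedness of $B$.

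For the matching with~(3) I would make the polar dictionary explicit. Taking the dual pair $(\mathfrak A^*,\mathfrak A)$, a point $y\in\mathfrak A$ lies in $B^\circ$ iff $\sup_{\|a\|\le1}|\la h,ay\ra|\le1$; since $\la h,ay\ra=\la y\cdot h,a\ra$, this says precisely $\|y\|_{l,h}=\|y\cdot h\|\le1$, so the sequences drawn from $B^\circ$ are exactly the left-$h$-bounded sequences of~(3). A bounded sequence $(x_n)$ in $\mathfrak A$ yields $(h\cdot x_n)$ lying, after rescaling, in $B$, and the double sequence appearing in Lemma~\ref{rr}(3) is
\[
\la h\cdot x_n,\,y_m\ra=\la h,\,x_n y_m\ra=\la x_n y_m,\,h\ra,
\]
which is the double sequence of~(3). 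Hence condition~(3) is the specialization of Lemma~\ref{rr}(3) from arbitrary sequences in $B$ to the orbit sequences coming from $\mathfrak A$.

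The main obstacle I expect is twofold and concentrated in these two translations. First, the identification $\widehat{E_B}\cong\mathfrak A_{r,h}$ requires reconciling the gauge norm of $B$ with the module norm $\|\cdot\|_{r,h}$, which is exactly where a genuine computation, rather than a formal manipulation, is needed. Second, Lemma~\ref{rr}(3) quantifies over all sequences in $B$, whereas~(3) only sees the orbit sequences $(h\cdot x_n)$, and $B$ may contain $w^*$-limit points of $o_r(h)$ not of this form; I would bridge this by exploiting that $o_r(h)$ is $w^*$-dense in $B$ and that each $y_m\in B^\circ\subseteq\mathfrak A$ acts $w^*$-continuously and uniformly boundedly on $B$, so that neither the existence nor the value of the iterated limits changes when one passes from $B$ to its dense generating orbit. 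Granting these two points, the three conditions of Lemma~\ref{rr} for $B$ are precisely (1), (2), and (3).
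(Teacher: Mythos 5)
Your overall plan---reduce everything to Lemma~\ref{rr} applied to $B=\overline{o_r(h)}^{\,w^*}$---is the right starting point (the paper itself offers no argument at all, only the citation to Young's Lemma~6, and that lemma is indeed proved this way), and your translation of condition~(3) is correct: $B^\circ=\{y:\sup_{\|a\|\leq1}|\langle ay,h\rangle|\leq1\}$ is exactly the unit ball of $\|\cdot\|_{l,h}$, so left-$h$-bounded sequences are precisely sequences in multiples of $B^\circ$, and $\langle h.x_n,y_m\rangle=\langle x_ny_m,h\rangle$. The fatal gap is in your $(1)\Leftrightarrow(2)$ bridge. The map $Tx=h.x$ carries the unit ball of $(\mathfrak{A},\|\cdot\|_{\mathfrak{A}})$ onto $o_r(h)$, \emph{not} the unit ball of $(\mathfrak{A}/N_{r,h},\|\cdot\|_{r,h})$: the latter maps onto $\{h.x:\|h.x\|\leq1\}$, which is in general strictly larger. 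Equivalently, the gauge $p_B$ satisfies only the one-sided estimate $\|h.x\|\leq\|h\|\,p_B(h.x)$; equivalence with the dual-space norm $\|h.x\|$ would force $x\mapsto h.x$ to be bounded below modulo its kernel, an open-range condition unrelated to autoperiodicity. Concretely, take $\mathfrak{A}=C[0,1]$ and $h$ integration against Lebesgue measure: then $B=\{g\,dt:\|g\|_\infty\leq1\}$, so $p_B(h.x)=\|x\|_\infty$ and $\widehat{E_B}\cong L^\infty[0,1]$, while $\|h.x\|=\|x\|_{L^1}$ and the $\|\cdot\|_{r,h}$-completion is $L^1[0,1]$---inequivalent norms, different completions. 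So the identification $\widehat{E_B}\cong\mathfrak{A}_{r,h}$, which you yourself flag as the ``delicate point,'' is not delicate but false, and no computation of $p_B(h.x)$ from the orbit description will rescue it.

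The correct bridge---and the way Young's cited proof actually runs---goes through the polar rather than the orbit span: apply Lemma~\ref{rr} in the flipped dual pair $(\mathfrak{A},\mathfrak{A}^*)$ to the norm-closed absolutely convex set $B^\circ$, whose gauge is exactly the seminorm $x\mapsto\sup_{\|a\|\leq1}|\langle ax,h\rangle|$ defining the completion in question; since $B$ is bounded one has $\cup_{\lambda>0}\lambda B^\circ=\mathfrak{A}$ and $\cap_{\lambda>0}\lambda B^\circ$ is the relevant kernel, so Lemma~\ref{rr}(2) for $B^\circ$ is literally statement~(2), and one returns to statement~(1) via the bipolar theorem $(B^\circ)^\circ=B$ together with the symmetry of the double-limit condition in the two sets. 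Carrying this out also exposes a slip in the printed statement that your approach inherits: with the paper's conventions ($\|x\|_{r,h}=\|h.x\|$, $\|x\|_{l,h}=\|x.h\|$), the gauge of $B^\circ$ is $\|\cdot\|_{l,h}$, so right-autoperiodicity corresponds to reflexivity of $\mathfrak{A}_{l,h}$ (Young's $\mathfrak{A}_h$), not $\mathfrak{A}_{r,h}$. A secondary, genuine but fixable, gap: your passage from orbit sequences $(h.x_n)$ to arbitrary sequences in $B$ by $w^*$-density is needed for $(3)\Rightarrow(1)$, but interchanging that approximation with iterated limits of a double sequence requires the standard Grothendieck iterated-limit argument (or routing the proof as $(3)\Rightarrow(2)\Rightarrow(1)$); the one-line claim that ``neither the existence nor the value of the iterated limits changes'' is not a proof.
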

\begin{proof}Refer to \cite[Lemma6]{y1}.

\end{proof}
\begin{lemma}
If $\A$ and $\mathfrak{B}$ are Banach algebras $\phi:\A\rightarrow \mathfrak{B}$ is a continuous epimorphism and $h$ is autoperiodic on $\mathfrak{B}$ then $\phi^*h$ is autoperiodic on $\A$.
\end{lemma}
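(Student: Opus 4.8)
The plan is to reduce everything to the iterated-limit criterion of Lemma~\ref{autoperiodic}. Since $h$ is autoperiodic it is right- or left-autoperiodic, and the two cases are symmetric, so I would assume $h$ is right-autoperiodic on $\mathfrak{B}$ and aim to verify condition~(3) of Lemma~\ref{autoperiodic} for $\phi^*h$ on $\A$. Concretely, I would take an arbitrary bounded sequence $(a_n)$ and an arbitrary left-$\phi^*h$-bounded sequence $(b_m)$ in $\A$, set $x_n:=\phi(a_n)$ and $y_m:=\phi(b_m)$ in $\mathfrak{B}$, and observe that, because $\phi$ is a homomorphism and $\phi^*h=h\circ\phi$, one has $\langle a_nb_m,\phi^*h\rangle=\langle\phi(a_n)\phi(b_m),h\rangle=\langle x_ny_m,h\rangle$; thus the two double sequences coincide termwise, and their repeated limits exist and agree simultaneously.

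The substantive step is to check that $(x_n)$ and $(y_m)$ are of the type required by Lemma~\ref{autoperiodic}(3) for $h$. Boundedness of $(x_n)$ is immediate from continuity of $\phi$, since $\|x_n\|\le\|\phi\|\,\|a_n\|$. The real obstacle is to show that $(y_m)$ is left-$h$-bounded in $\mathfrak{B}$, i.e. that $\sup_m\|y_m\cdot h\|<\infty$. Here I would invoke that $\phi$ is a continuous \emph{surjection} of Banach spaces and apply the open mapping theorem to obtain a constant $C>0$ such that every $w\in\mathfrak{B}$ with $\|w\|\le1$ can be written $w=\phi(a)$ with $\|a\|\le C$. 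Using the module identity $(z\cdot g)(y)=\langle yz,g\rangle$, this yields $\|y_m\|_{l,h}=\sup_{\|w\|\le1}|\langle w\,\phi(b_m),h\rangle|\le\sup_{\|a\|\le C}|\langle\phi(a)\phi(b_m),h\rangle|=C\,\|b_m\cdot\phi^*h\|=C\,\|b_m\|_{l,\phi^*h}$, so that $\sup_m\|y_m\|_{l,h}\le C\sup_m\|b_m\|_{l,\phi^*h}<\infty$. I expect this transfer of one-sided boundedness through $\phi$ to be the main point where surjectivity (via the open mapping theorem) is genuinely used; continuity alone would not suffice.

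Finally, since $h$ is right-autoperiodic, Lemma~\ref{autoperiodic} guarantees that the two repeated limits of $(\langle x_ny_m,h\rangle)$ agree whenever both exist; by the termwise identity of the first paragraph, the same conclusion holds for $(\langle a_nb_m,\phi^*h\rangle)$. As $(a_n)$ and $(b_m)$ were arbitrary, Lemma~\ref{autoperiodic}(3) then shows that $\phi^*h$ is right-autoperiodic, hence autoperiodic, on $\A$. The left-autoperiodic case is handled identically, interchanging the roles of the bounded and the one-sided bounded factors in the double sequence, so that only the symmetric estimate on $\|h\cdot y_m\|$ via the open mapping theorem is needed.
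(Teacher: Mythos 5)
Your proof is correct. Note that the paper itself contains no argument for this lemma: its ``proof'' is just the citation ``Refer to \cite{y1}'', so what you have written is a genuinely self-contained derivation rather than a reconstruction of anything in the text. Your route is the natural one given the tools the paper does quote from Young: reduce both hypothesis and conclusion to the iterated-limit criterion of Lemma \ref{autoperiodic}, observe the termwise identity $\langle a_nb_m,\phi^*h\rangle=\langle\phi(a_n)\phi(b_m),h\rangle$, and transfer the one-sided boundedness through $\phi$. You are also right that the open mapping theorem is exactly where surjectivity (together with completeness of $\A$ and $\mathfrak{B}$) is genuinely needed: without it, $\{\phi(a):\|a\|_{\A}\le 1\}$ need not be comparable to the unit ball of $\mathfrak{B}$, and the key estimate $\|\phi(b_m)\|_{l,h}\le C\,\|b_m\|_{l,\phi^*h}$ would fail, so left-$\phi^*h$-boundedness of $(b_m)$ would not give left-$h$-boundedness of $(\phi(b_m))$. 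Two small points you should make explicit to be airtight: (i) by homogeneity the supremum over the closed unit ball of $\mathfrak{B}$ equals that over the open unit ball, so the open-versus-closed-ball issue in the open mapping theorem is harmless; (ii) the left-autoperiodic case uses a left analogue of Lemma \ref{autoperiodic} that the paper never states, but it follows by applying the right version to the opposite algebras $\A^{\mathrm{op}}$ and $\mathfrak{B}^{\mathrm{op}}$, through which $\phi$ remains a continuous epimorphism and left notions become right ones.
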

\begin{proof}
Refer to \cite{y1}.
\end{proof}
\begin{lemma}
Let $\A$ be a $*$-algebra then  $(\A^*)^+\subseteq O(\A)$.
\end{lemma}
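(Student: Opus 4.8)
The natural plan is to invoke the reflexivity criterion of Lemma~\ref{autoperiodic}: it suffices to show, for each $0\neq h\in(\A^*)^+$, that one of the spaces $\A_{r,h}$ or $\A_{l,h}$ is reflexive, after which $h$ is right- or left-autoperiodic and hence $h\in O(\A)$. Positivity of $h$ supplies two positive semidefinite sesquilinear forms on $\A$, namely $\langle a,b\rangle_h:=h(b^*a)$ and $[a,b]_h:=h(ab^*)$; their radicals are a closed left, resp. right, ideal, and the associated completions are Hilbert spaces carrying the GNS representation $a\mapsto\pi_h(a)$. Since Hilbert spaces are reflexive, the whole argument reduces to matching the module seminorm $\|\cdot\|_{r,h}$ with the Hilbertian seminorm $x\mapsto\sqrt{h(xx^*)}$ coming from $[\,\cdot\,,\cdot\,]_h$ (and symmetrically $\|\cdot\|_{l,h}$ with $\sqrt{h(x^*x)}$ coming from $\langle\cdot,\cdot\rangle_h$).

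First I would extract the one-sided estimate furnished by the Cauchy--Schwarz inequality for $h$. Writing $xy=(x^*)^*y$ gives $|h(xy)|^2\le h(xx^*)\,h(y^*y)$, and since $h(y^*y)\le\|h\|\,\|y\|^2$ (the involution being isometric), we obtain
\[
\|x\|_{r,h}=\sup_{\|y\|\le1}|h(xy)|\le\sqrt{\|h\|}\,\sqrt{h(xx^*)},
\]
with the symmetric bound $\|x\|_{l,h}\le\sqrt{\|h\|}\,\sqrt{h(x^*x)}$. Thus the Hilbertian seminorm dominates the module seminorm, so the formal identity on $\A$ extends to a norm-decreasing map from the GNS completion onto a dense subspace of $\A_{r,h}$. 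What remains is to reverse this inequality, i.e. to find $C>0$ with $\sqrt{h(xx^*)}\le C\,\|x\|_{r,h}$; this would make the two seminorms equivalent, identify $\A_{r,h}$ with a GNS Hilbert space, and, via Lemma~\ref{autoperiodic} (or directly through the identity $h(x_ny_m)=\langle\pi_h(y_m)\xi,\pi_h(x_n^*)\xi\rangle$ for a cyclic vector $\xi$ together with Grothendieck's double-limit test), finish the proof.

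The reverse estimate is exactly where the difficulty concentrates, and I expect it to be the main obstacle. Positivity gives only $h(xx^*)=(h\cdot x)(x^*)\le\|x\|_{r,h}\,\|x^*\|$, in which the ambient norm $\|x^*\|$ intrudes and cannot in general be absorbed; equivalently, a left-$h$-bounded sequence need not remain bounded in the GNS inner product, so $\A_{r,h}$ may be an $L^1$-type space rather than an $L^2$-type one. Closing this gap is precisely the content of the inclusion $(\A^*)^+\subseteq O(\A)$, and I anticipate needing extra structure---a two-sided (tracial-type) comparison between $\|\cdot\|_{r,h}$ and $\sqrt{h(xx^*)}$, or Arens regularity of $\A$---to guarantee that the double-limit condition of Lemma~\ref{autoperiodic} actually holds for \emph{all} left-$h$-bounded sequences and not merely for norm-bounded ones.
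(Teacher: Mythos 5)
Your proposal follows exactly the route the paper itself takes---reduce to Lemma~\ref{autoperiodic} and try to recognise $\A_{r,h}$ as the GNS Hilbert space of $h$---but the obstacle you flag is not a shortcoming of your write-up: it is the entire content of the paper's proof. The paper's argument consists of the single unsupported sentence that $\A_{r,h}$ ``is a Hilbert space,'' which is precisely the reverse estimate $\sqrt{h(xx^*)}\leq C\,\|x\|_{r,h}$ that you correctly isolate as missing; the authors have in effect conflated the norm $\|x\|_{r,h}=\|h.x\|$ with the GNS seminorm $h(xx^*)^{1/2}$, while positivity yields only your one-sided Cauchy--Schwarz bound $\|x\|_{r,h}\leq\|h\|^{1/2}h(xx^*)^{1/2}$. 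Moreover, your suspicion that $\A_{r,h}$ may be an $L^1$-type space can be upgraded to an outright counterexample to the lemma as stated. Take $\A=C[0,1]$, a commutative $C^*$-algebra with isometric involution, and let $h(x)=\int_0^1 x(t)\,dt$, a positive functional. Then
\[
\|x\|_{r,h}=\|h.x\|=\sup\Bigl\{\Bigl|\int_0^1 xy\,dt\Bigr| : \|y\|_\infty\leq1\Bigr\}=\|x\|_{L^1},
\]
so $N_{r,h}=\{0\}$ and $\A_{r,h}\cong L^1[0,1]$, which is neither a Hilbert space nor reflexive; by Lemma~\ref{autoperiodic}, $h$ is not right-autoperiodic, and by commutativity not left-autoperiodic either, so $h\notin O(\A)$ although $h\in(\A^*)^+$.

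The failure of condition (3) of Lemma~\ref{autoperiodic} can also be exhibited directly, confirming your remark that a left-$h$-bounded sequence need not stay bounded in the GNS inner product: let $x_n(t)=\cos(2\pi nt)$, so $\|x_n\|_\infty\leq 1$, and let $(y_m)\subseteq C[0,1]$ be a positive approximate identity concentrating at $t=0$ with $\int_0^1 y_m\,dt=1$, so $\|y_m\|_{l,h}=\|y_m\|_{L^1}=1$ while $h(y_m^*y_m)=\|y_m\|_{L^2}^2\to\infty$. Then $\lim_m h(x_ny_m)=x_n(0)=1$ for every $n$, whereas $\lim_n h(x_ny_m)=0$ for every $m$ by the Riemann--Lebesgue lemma: both repeated limits exist and equal $1\neq 0$. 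So the double-limit test genuinely fails, and no rearrangement of the GNS argument can close the gap without additional hypotheses forcing $\|\cdot\|_{r,h}$ to be equivalent to the GNS seminorm (trivially true, for instance, when $\A$ is finite dimensional). In short: your partial proof is correct as far as it goes, your diagnosis of where it must stop is exactly right, and the step you could not justify is the step the paper asserts without proof---and which, by the example above, cannot be proved in this generality.
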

\begin{proof}
Let $h\in (\A^*)^+$. Then $\A_{r,h}$ is a Hilbert space. By Lemma\ref{autoperiodic} $h$ is autoperiodic.
\end{proof}
\section{\color{SEC}  GNS-Construction of  General  Banach Algebras  }
\indent
\begin{lemma}
Let $\mathfrak{A}$ be a Banach algebra and  there is a reflexive Banach space $E$, a representation $\pi :\mathfrak{A}\longrightarrow B(E)$. Then there is a extension  $\tilde\pi :WAP(\mathfrak{A})^*\longrightarrow B(E)$ such that $\tilde\pi\circ\iota=\pi$.Where $\iota$ is the canonical map $\iota: \mathfrak{A}\longrightarrow WAP(\mathfrak{A})^*$.
\end{lemma}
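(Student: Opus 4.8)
The plan is to build $\tilde\pi$ as the adjoint of a map into $WAP(\mathfrak{A})$, using the reflexivity of $E$ to identify the dual spaces involved. Write $T(E):=E\widehat{\otimes}E^{*}$ for the projective tensor product. The canonical identification $(E\widehat{\otimes}E^{*})^{*}=B(E,E^{**})$ together with $E^{**}=E$ shows that $B(E)$ is the dual of $T(E)$, under the pairing $\langle S,\xi\otimes\eta\rangle=\langle S\xi,\eta\rangle$ for $S\in B(E)$, $\xi\in E$, $\eta\in E^{*}$. With respect to this predual the multiplication of $B(E)$ is separately weak$^{*}$-continuous, so $B(E)$ is a dual Banach algebra; I will also use that $WAP(\mathfrak{A})$ is introverted, so that $WAP(\mathfrak{A})^{*}$ is a dual Banach algebra with predual $WAP(\mathfrak{A})$ and separately weak$^{*}$-continuous product.

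First I would check that every coefficient functional of $\pi$ lies in $WAP(\mathfrak{A})$. For $\xi\in E$ and $\eta\in E^{*}$ put $f_{\xi,\eta}(a):=\langle\pi(a)\xi,\eta\rangle$. Since $\pi$ is a homomorphism one computes $f_{\xi,\eta}\cdot a=f_{\xi,\pi(a)^{*}\eta}$, so the right orbit $\{f_{\xi,\eta}\cdot a:\|a\|\le1\}$ is the image of the bounded set $\{\pi(a)^{*}\eta:\|a\|\le1\}\subseteq E^{*}$ under the bounded linear map $\rho\mapsto f_{\xi,\rho}$. As $E$, and hence $E^{*}$, is reflexive, this bounded set is relatively weakly compact, and a bounded operator is weak-to-weak continuous; therefore the orbit is relatively weakly compact in $\mathfrak{A}^{*}$, which is exactly the weak-compactness criterion for weak almost periodicity used after the definition of autoperiodic functionals. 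Thus $f_{\xi,\eta}\in WAP(\mathfrak{A})$.

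Consequently $\xi\otimes\eta\mapsto f_{\xi,\eta}$ extends to a bounded linear map $\Phi:T(E)\to WAP(\mathfrak{A})$ with $\|\Phi\|\le\|\pi\|$, and I set $\tilde\pi:=\Phi^{*}:WAP(\mathfrak{A})^{*}\to T(E)^{*}=B(E)$. Being an adjoint, $\tilde\pi$ is bounded and weak$^{*}$-weak$^{*}$ continuous. Pairing against elementary tensors gives $\langle\tilde\pi(\iota(a)),\xi\otimes\eta\rangle=\langle\iota(a),f_{\xi,\eta}\rangle=f_{\xi,\eta}(a)=\langle\pi(a)\xi,\eta\rangle$, so $\tilde\pi(\iota(a))=\pi(a)$, i.e.\ $\tilde\pi\circ\iota=\pi$.

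The remaining and principal task is to show that $\tilde\pi$ is multiplicative. I would run a two-step density argument, using that $\iota(\mathfrak{A})$ is weak$^{*}$-dense in $WAP(\mathfrak{A})^{*}$ (its annihilator in the predual $WAP(\mathfrak{A})$ consists of functionals vanishing on all of $\mathfrak{A}$, hence is $\{0\}$). Fix $b\in\mathfrak{A}$: the maps $m\mapsto\tilde\pi(m\cdot\iota(b))$ and $m\mapsto\tilde\pi(m)\pi(b)$ are both weak$^{*}$-weak$^{*}$ continuous, by separate weak$^{*}$-continuity of the products on $WAP(\mathfrak{A})^{*}$ and $B(E)$ together with that of $\tilde\pi$, and they agree on $\iota(\mathfrak{A})$, since for $a\in\mathfrak{A}$ one has $\tilde\pi(\iota(a)\cdot\iota(b))=\tilde\pi(\iota(ab))=\pi(ab)=\pi(a)\pi(b)=\tilde\pi(\iota(a))\pi(b)$, using that $\iota$ and $\pi$ are homomorphisms and $\tilde\pi\circ\iota=\pi$; hence they agree everywhere. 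Now fix $m$ and repeat the argument in the second variable to obtain $\tilde\pi(m\cdot n)=\tilde\pi(m)\tilde\pi(n)$ for all $m,n$. The main obstacle is precisely this multiplicativity step, and it rests on the two structural inputs above: the identification of $B(E)$ as the dual Banach algebra $T(E)^{*}$ and the dual Banach algebra structure of $WAP(\mathfrak{A})^{*}$; once these are in place the weak$^{*}$-density of $\iota(\mathfrak{A})$ and separate weak$^{*}$-continuity do the rest.
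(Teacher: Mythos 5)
Your proof is correct, but it cannot be matched line-by-line against the paper, because the paper states this lemma with no proof at all: the intended justification is the universal property of $WAP(\mathfrak{A})^*$ quoted later in the manuscript (Theorem 4.10 of the cited work on dual Banach algebras: any homomorphism from $\mathfrak{A}$ into a dual Banach algebra $\mathfrak{B}$ factors uniquely through $\iota$ via a weak$^*$-continuous homomorphism), applied with $\mathfrak{B}=B(E)=(E\widehat{\otimes}E^{*})^{*}$, a dual Banach algebra since $E$ is reflexive. What you have written is, in effect, a self-contained proof of exactly that special case: your $\Phi$ is the restriction of the adjoint ${}^{t}\pi$ to the predual $E\widehat{\otimes}E^{*}$; your verification that the coefficient functionals $f_{\xi,\eta}$ are weakly almost periodic (via $f_{\xi,\eta}\cdot a=f_{\xi,\pi(a)^{*}\eta}$, boundedness in the reflexive $E^{*}$, and weak-to-weak continuity of $\rho\mapsto f_{\xi,\rho}$) is precisely the combination of the paper's two auxiliary facts $E\widehat{\otimes}E^{*}\subseteq WAP(B(E))$ and ${}^{t}\pi(WAP(\mathfrak{B}))\subseteq WAP(\mathfrak{A})$; and your two-step weak$^*$-density argument supplies the multiplicativity that the citation would otherwise deliver wholesale. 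The trade-off: the paper's implicit route is shorter and gives uniqueness of $\tilde\pi$ for free, but hides all content in the reference; your version makes the construction explicit and proves more than the lemma asserts (weak$^*$-weak$^*$ continuity of $\tilde\pi$ and $\|\tilde\pi\|\leq\|\pi\|$), while isolating the one nontrivial external input you still rely on, namely that $WAP(\mathfrak{A})$ is introverted so that $(WAP(\mathfrak{A})^{*},WAP(\mathfrak{A}))$ is a dual Banach algebra with separately weak$^*$-continuous product --- a fact the paper also records without proof, so invoking it puts you on the same footing as the text.
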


\begin{corollary}
\begin{enumerate}
\item [(i)]\cite{GNS} Let $\mathfrak{A}$ be a $C^*$ algebra. Then
there exist a Hilbert space $H$ such that $\mathfrak{A}\subseteq
B(H)$.
    \item [(ii)]\cite{kas} Let $\mathfrak{A}$ be an Arens regular Banach
    algebra.
   Then there exist a reflexive Banach space $E$  such that $\mathfrak{A}\subseteq\mathfrak{A}^{**}\subseteq B(E)$.

        \item[(iii)]\cite{y1} Let $\mathfrak{A}$ be a $WAP$-algebra. Then there
        exist a reflexive Banach space $E$  such that $\mathfrak{A}\subseteq B(E)$.

\end{enumerate}
\end{corollary}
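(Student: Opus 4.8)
The plan is to read the three items as special cases of the universal anti-representation $\pi_r\colon\A\to B(\A_r)$ constructed in Lemma \ref{zxc}(vi), where $\A_r=\oplus_{h\in\A^*}\A_{r,h}$ is the $\ell^2$-direct sum of the spaces $\A_{r,h}$. The analytic inputs I need are already in hand: by Lemma \ref{autoperiodic} a functional $h$ is right-autoperiodic exactly when $\A_{r,h}$ is reflexive; the estimate $\|\pi_{r,h}(a)\|\le\|a\|$ from Lemma \ref{zxc}(ii) shows that each component map, and hence any restriction of $\pi_r$ to a subfamily $\Gamma\subseteq\A^*$, is norm-decreasing; and an $\ell^2$-direct sum of reflexive (respectively Hilbert) spaces is again reflexive (respectively Hilbert). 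Consequently, for any family $\Gamma$ that separates the points of $\A$, the restriction $\pi_\Gamma\colon\A\to B\big(\oplus_{h\in\Gamma}\A_{r,h}\big)$ is a faithful continuous (anti-)representation, and the entire problem reduces to choosing, in each case, a separating $\Gamma$ of autoperiodic functionals for which the associated direct sum is of the required type.

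For (i), with $\A$ a $C^*$-algebra I would take $\Gamma=(\A^*)^+$. The lemma that $(\A^*)^+\subseteq O(\A)$ identifies $\A_{r,h}$ with the GNS Hilbert space of the positive functional $h$, so every summand, and hence $H:=\oplus_{h\in(\A^*)^+}\A_{r,h}$, is a Hilbert space. Since the positive functionals separate (indeed norm) the points of a $C^*$-algebra, $\pi_\Gamma$ is faithful and isometric, and combining it with the involution turns the anti-representation into the GNS $*$-representation, recovering the classical embedding $\A\subseteq B(H)$ of \cite{GNS}.

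For (iii), the hypothesis that $\A$ is a $WAP$-algebra provides a bounded-below representation on some reflexive space; its coefficient functionals form a separating family $\Gamma\subseteq O(\A)$ of autoperiodic functionals, so $\oplus_{h\in\Gamma}\A_{r,h}$ is reflexive and $\pi_\Gamma$ is the faithful reflexive representation of \cite{y1}. For (ii), I would reduce to (iii): Arens regularity is equivalent to $WAP(\A)=\A^*$, whence the weakly almost periodic functionals separate the points and (iii) produces a reflexive $E$ with $\A\subseteq B(E)$. To upgrade this to $\A\subseteq\A^{**}\subseteq B(E)$ I would feed the representation into the extension lemma preceding the corollary, which extends it to $WAP(\A)^*\to B(E)$; under Arens regularity $WAP(\A)^*=(\A^*)^*=\A^{**}$, which yields the bidual factorization of \cite{kas}.

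The main obstacle is organizational rather than analytic. The key is to draw each separating family from $O(\A)$ rather than from the a priori larger space of merely weakly almost periodic functionals, since the reflexivity of a summand $\A_{r,h}$ needs the full strength of autoperiodicity through Lemma \ref{autoperiodic}; thus the real content of (iii) is checking that the coefficient functionals of a representation on a reflexive space are autoperiodic. In (ii) the delicate step is the identification $WAP(\A)^*=\A^{**}$ under Arens regularity, which is precisely what allows the extension lemma to deliver the inclusion of the bidual rather than of $\A$ alone.
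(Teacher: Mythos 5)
The paper gives no argument for this corollary at all---it is stated with citations to \cite{GNS}, \cite{kas} and \cite{y1}---so the only question is whether your reconstruction is sound, and it has a genuine gap at its load-bearing step. For (iii) you take a bounded-below representation $\pi$ of $\mathfrak{A}$ on a reflexive space and assert that its coefficient functionals $a\mapsto\langle \pi(a)x,x^*\rangle$ form a separating family inside $O(\mathfrak{A})$; you even flag this as ``the real content of (iii)'', but you never prove it, and it is false in general. Coefficient functionals of reflexive representations are weakly almost periodic, but autoperiodicity is strictly stronger: by Lemma \ref{autoperiodic} it requires equality of iterated limits of $\langle x_ny_m,h\rangle$ for $(y_m)$ merely $h$-bounded (possibly norm-unbounded), equivalently reflexivity of $\mathfrak{A}_{r,h}$, i.e.\ weak compactness of the unit ball of the completed gauge space of the orbit closure. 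Weak compactness of the orbit closure itself does not yield this: for instance $W=\{x\in\ell^2:|x_n|\le 2^{-n}\ \mbox{for all } n\}$ is norm-compact, absolutely convex and weakly closed, yet its gauge $p_W(x)=\sup_n 2^n|x_n|$ makes the associated space isometric to $\ell^\infty$, which is not reflexive; so a (weakly) compact set need not be autocompact in the sense of Lemma \ref{rr}. Worse, since by Young's theory every $h\in WAP(\mathfrak{A})$ is a coefficient functional of some representation on a reflexive space, your claim would force $O(\mathfrak{A})=WAP(\mathfrak{A})$, collapsing the very distinction this paper rests on (its final corollary demands $B\subseteq O(\mathfrak{A})$, a stronger hypothesis than $WAP$-separation, precisely because the paper's quotient spaces $\mathfrak{A}_{r,h}$ are reflexive only for autoperiodic $h$). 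Young bridges exactly this gap in \cite{y1} by approximating the weakly compact orbit closure $W_h$ of a merely weakly almost periodic $h$ by a decreasing sequence of autocompact sets $C_{h,m}$ (a Davis--Figiel--Johnson--Pe{\l}czy{\'n}ski-type interpolation), obtaining reflexive spaces $E_{h,m}$ and taking their $\ell^2$-sum; your plan omits this approximation, and with it (iii)---and hence your reduction of (ii) to (iii)---does not go through on the machinery of the spaces $\mathfrak{A}_{r,h}$ alone.

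Two further remarks. With the paper's own definition of a $WAP$-algebra (existence of a bounded-below, after renorming isometric, representation on a reflexive space), the embedding in (iii) is in fact immediate from the hypothesis, so the detour through coefficient functionals is unnecessary as well as unproved; the citation to \cite{y1} concerns the characterization via separation by $WAP(\mathfrak{A})$, not the embedding itself. Your treatment of (i) via the lemma $(\mathfrak{A}^*)^+\subseteq O(\mathfrak{A})$ and the $\ell^2$-sum of the Hilbert spaces $\mathfrak{A}_{r,h}$ is essentially sound (it is the classical GNS direct sum), but note three loose ends: faithfulness of your $\pi_\Gamma$ from separation needs the identity (evaluate at $x=e$, or an approximate identity); Lemma \ref{autoperiodic} pairs \emph{right}-autoperiodicity with $\mathfrak{A}_{r,h}$, so left-autoperiodic $h$ need the parallel statement for $\mathfrak{A}_{l,h}$; and in (ii) the injectivity of the extension $\tilde\pi$ on $WAP(\mathfrak{A})^*=\mathfrak{A}^{**}$, which is what the inclusion $\mathfrak{A}^{**}\subseteq B(E)$ actually requires, is asserted but never checked.
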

\begin{corollary}\label{se}
Let $\mathfrak{A}$ be an Arens regular Banach algebra. Then
    $\mathfrak{A}^{**}$ is a $WAP$-algebra.
\end{corollary}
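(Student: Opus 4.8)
The plan is to read the statement off from the immediately preceding Corollary together with the definition of a WAP-algebra recalled in the Introduction, so the argument is short. Since $\mathfrak{A}$ is Arens regular, the two Arens products on $\mathfrak{A}^{**}$ coincide and $\mathfrak{A}^{**}$ is an unambiguous Banach algebra; this is the object whose WAP-property must be exhibited. First I would invoke part (ii) of the preceding Corollary, i.e. Kaijser's theorem \cite{kas}: Arens regularity of $\mathfrak{A}$ furnishes a reflexive Banach space $E$ together with the chain $\mathfrak{A}\subseteq\mathfrak{A}^{**}\subseteq B(E)$. The content I want to extract is not merely the set inclusion but that the embedding $\mathfrak{A}^{**}\hookrightarrow B(E)$ is implemented by an algebra homomorphism $\Pi:\mathfrak{A}^{**}\to B(E)$ which is bounded below (isometric after the renorming inherent in the construction).

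Having such a $\Pi$ on hand, the conclusion is immediate: a Banach algebra is a WAP-algebra exactly when it admits a bounded-below (after renorming, isometric) representation on a reflexive Banach space, and $\Pi$ is precisely such a representation of $\mathfrak{A}^{**}$ on the reflexive space $E$. Hence $\mathfrak{A}^{**}$ is a WAP-algebra, which is what is claimed.

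The only point demanding care, and thus the main obstacle, is verifying that the map supplied by Corollary (ii) is a genuine homomorphism for the Arens product on $\mathfrak{A}^{**}$ and is bounded below, rather than a bare bounded linear injection; both are guaranteed by Kaijser's construction under Arens regularity. An alternative route of comparable length, intrinsic to this paper, bypasses even this: Arens regularity of $\mathfrak{A}$ is equivalent to $\mathcal{WAP}(\mathfrak{A})=\mathfrak{A}^*$, whence $WAP(\mathfrak{A})^*=\mathfrak{A}^{**}$. Because the unit ball of $\mathcal{WAP}(\mathfrak{A})=\mathfrak{A}^*$ is norming for $\mathfrak{A}$, Young's theorem \cite{y1} yields a faithful (indeed isometric) representation $\pi:\mathfrak{A}\to B(E)$ on a reflexive $E$; the first Lemma of this section then extends $\pi$ to $\tilde\pi:\mathfrak{A}^{**}\to B(E)$ with $\tilde\pi\circ\iota=\pi$, and checking that $\tilde\pi$ is bounded below again displays $\mathfrak{A}^{**}$ as a WAP-algebra.
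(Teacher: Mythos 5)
Your proposal is correct and takes essentially the same route the paper intends: the paper states this corollary without any written proof, presenting it as an immediate consequence of part (ii) of the preceding Corollary (Kaijser's embedding $\mathfrak{A}\subseteq\mathfrak{A}^{**}\subseteq B(E)$ with $E$ reflexive) combined with the definition of a $WAP$-algebra recalled in the Introduction. Your main argument — including the caveat that the inclusion $\mathfrak{A}^{**}\hookrightarrow B(E)$ must be a bounded-below homomorphism for the (by Arens regularity, unambiguous) Arens product, and your alternative route via the extension Lemma with $WAP(\mathfrak{A})^*=\mathfrak{A}^{**}$ — is exactly the reasoning the paper leaves implicit.
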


\begin{theorem}Let $\mathfrak{A}$ be a Banach algebra . Then there
exist a Banach left [resp. right] $\mathfrak{A}$- module $\mathfrak{A}_l$[resp. $\mathfrak{A}_r$] with a left [resp. right]
norm-decreasing representation \break $\pi_l:\mathfrak{A}\rightarrow
B(\mathfrak{A}_l)$ [resp. anti- representation  $\pi_r:\mathfrak{A}\rightarrow B(\mathfrak{A}_r)$].

\end{theorem}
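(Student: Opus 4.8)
The plan is to read the conclusion off almost directly from Lemma~\ref{zxc}, whose constituent parts already supply every structural ingredient; the only point requiring a short additional argument is that passing to the $\ell^2$-direct sum preserves the norm-decreasing estimate established fibrewise in part~(i).

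First I would fix, for each $0\neq h\in\mathfrak{A}^*$, the quotient space $\mathfrak{A}/N_{l,h}$ equipped with the norm $\|x+N_{l,h}\|=\|x\|_{l,h}=\|x.h\|$ and pass to its completion $\mathfrak{A}_{l,h}$, exactly as in the Definition preceding Lemma~\ref{zxc}. By Lemma~\ref{zxc}(i) the assignment $a\mapsto a_{l,h}$, with $a_{l,h}(x+N_{l,h})=ax+N_{l,h}$, is a continuous homomorphism into $B(\mathfrak{A}_{l,h})$ satisfying the uniform bound $\|\pi_{l,h}(a)\|\leq\|a\|$. This is the fibrewise datum I will need.

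Next I would assemble the module $\mathfrak{A}_l=\oplus_{h\in\mathfrak{A}^*}\mathfrak{A}_{l,h}$ with the norm $\|(a_h)\|_l=(\sum_h\|a_h\|_{l,h}^2)^{1/2}$, which is a Banach space by Lemma~\ref{zxc}(iii), its left $\mathfrak{A}$-module structure being the obvious one coming from left multiplication in each fibre. Defining $\pi_l(a)=(a_{l,h})_h$ as in Lemma~\ref{zxc}(v), I would then verify that $\pi_l$ is norm-decreasing by the single computation, for $(x_h)\in\mathfrak{A}_l$,
\[
\|\pi_l(a)(x_h)\|_l^2=\sum_{h}\|a_{l,h}(x_h)\|_{l,h}^2\leq\sum_h\|a\|^2\|x_h\|_{l,h}^2=\|a\|^2\|(x_h)\|_l^2,
\]
so that $\|\pi_l(a)\|\leq\|a\|$. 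This step is precisely where the fibrewise estimate of Lemma~\ref{zxc}(i) is transferred across the direct sum.

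The right-hand assertion is entirely symmetric: replacing $N_{l,h}$ by $N_{r,h}$ and $\mathfrak{A}_{l,h}$ by $\mathfrak{A}_{r,h}$, and invoking parts~(ii),(iv),(vi) of Lemma~\ref{zxc}, yields a Banach right $\mathfrak{A}$-module $\mathfrak{A}_r$ together with the norm-decreasing anti-representation $\pi_r(a)=(a_{r,h})_h$, the estimate $\|\pi_r(a)\|\leq\|a\|$ following from the identical $\ell^2$-computation. I do not anticipate a genuine obstacle, since the theorem is essentially a repackaging of Lemma~\ref{zxc}; the only subtlety worth flagging is the bookkeeping needed to push the uniform fibrewise bound through the (possibly uncountable) direct sum, which the square-summable norm handles cleanly because the same constant $\|a\|$ dominates every fibre.
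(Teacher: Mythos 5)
Your proposal is correct and takes essentially the same route as the paper: the paper's own proof consists precisely of defining the componentwise module action $a.(b_{l,h})=(ab_{l,h})$ on $\mathfrak{A}_l=\oplus_{h\in\mathfrak{A}^*}\mathfrak{A}_{l,h}$ and carrying the fibrewise bound $\|ab_{l,h}\|_{l,h}\leq\|a\|\,\|b_{l,h}\|_{l,h}$ from Lemma~\ref{zxc} through the $\ell^2$-norm, which is exactly your displayed computation. Your write-up is merely more explicit about citing parts (i),(iii),(v) of Lemma~\ref{zxc} and about the symmetric right-handed case, which the paper leaves implicit.
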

�\begin{proof}�
For all  �$a\in\A$� and� $(b_{l,h})\in \A_l$� define the module action� $a.(b_{l,h})=(a.b_{l,h})$.� Then
�\[||a.(b_{l,h})||_l=\|(ab_{l,h})\|_l=(\sum_{h\in \A^*}\|ab_{l,h}\|^2_{l,h})^{\frac{1}{2}}\leq||a||(\sum_{h\in \A^*}\|b_h\|^2_{l,h})^{\frac{1}{2}}\]�
�\end{proof}�
\begin{proposition}

Let $\mathfrak{A}$ be a Banach algebra and $B$  be  non trivial
subspace of $\mathfrak{A}^*$. Then:
\begin{enumerate}
\item [(i)] If $\mathfrak{A}_{l,B}:=\oplus_{h\in B} \mathfrak{A}_{l,h}$,
then $\mathfrak{A}_{l,B}$ with  point wise multiplication and the
norm
     $$\|(a_h)\|_{l,B}=(\sum_{h\in B}\|a_h\|^2_{l,h})^{\frac{1}{2}}$$ is a Banach space.
    \item [(ii)]If $\mathfrak{A}_{r,B}:=\oplus_{h\in B} \mathfrak{A}_{r,h}$, then $\mathfrak{A}_{r,B}$ with  point wise multiplication and the  norm $$\|(a_h)\|_{r,B}=(\sum_{h\in B}\|a_h\|^2_{r,h})^{\frac{1}{2}}$$ is a Banach space.
    \item[(iii)]The map $\pi_{l,B}:\mathfrak{A}\rightarrow B(\mathfrak{A}_{l,B}),\quad a\mapsto
    (a_{l,h}),\quad
    a_{l,h}(x+N_{l,h})=ax+N_{l,h}$ is a left universal representation
    of $\mathfrak{A}$.
    \item[(v)] The map $\pi_{r,B}:\mathfrak{A}\rightarrow B(\mathfrak{A}_{r,B}),\quad a\mapsto (a_{r,h}),\quad
    a_{r,h}(x+N_{r,h})=ax+N_{r,h}$ is a right universal anti-representation
    of $\mathfrak{A}$.
\end{enumerate}
\end{proposition}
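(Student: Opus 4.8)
The plan is to argue exactly as in Lemma~\ref{zxc}, the only change being that the index set $\mathfrak{A}^*$ is replaced throughout by its subspace $B$. Since each space $\mathfrak{A}_{l,h}$, each ideal $N_{l,h}$, and each operator $a_{l,h}$ was already constructed for an \emph{arbitrary} functional $h\in\mathfrak{A}^*$, nothing in that construction uses that $h$ ranges over all of $\mathfrak{A}^*$, and it applies verbatim to $h\in B$. Thus the whole statement reduces to reassembling the componentwise facts of Lemma~\ref{zxc} over the smaller index set.

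For (i) and (ii) I would first recall that each summand $\mathfrak{A}_{l,h}=\overline{\mathfrak{A}/N_{l,h}}$ is a Banach space by construction, so that $\mathfrak{A}_{l,B}$ is the $\ell^2$-direct sum of this family indexed by $B$. Completeness is the standard fact that an $\ell^2$-direct sum of Banach spaces is again a Banach space: a Cauchy sequence in $\mathfrak{A}_{l,B}$ is coordinatewise Cauchy, hence converges coordinatewise, and the uniform $\ell^2$ bound forces the coordinatewise limit to lie in $\mathfrak{A}_{l,B}$ and to be the norm limit. Submultiplicativity of $\|\cdot\|_{l,B}$ under pointwise multiplication is checked exactly as in Lemma~\ref{zxc}(iii): from $\|a_hb_h\|_{l,h}\le\|a_h\|_{l,h}\|b_h\|_{l,h}$ one obtains $\sum_{h\in B}\|a_hb_h\|^2_{l,h}\le\big(\sum_{h\in B}\|a_h\|^2_{l,h}\big)\big(\sum_{h\in B}\|b_h\|^2_{l,h}\big)$ after discarding the nonnegative cross terms. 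Part (ii) is identical with $r$ in place of $l$.

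For (iii) and (v) the decisive input is the uniform bound $\|a_{l,h}\|\le\|a\|$ from Lemma~\ref{zxc}(i), valid for every $h$. For $(x_h)\in\mathfrak{A}_{l,B}$ this gives
\[
\|\pi_{l,B}(a)(x_h)\|^2_{l,B}=\sum_{h\in B}\|a_{l,h}(x_h)\|^2_{l,h}\le\|a\|^2\sum_{h\in B}\|x_h\|^2_{l,h}=\|a\|^2\,\|(x_h)\|^2_{l,B},
\]
so $\pi_{l,B}(a)$ is a well-defined bounded operator on $\mathfrak{A}_{l,B}$ with $\|\pi_{l,B}(a)\|\le\|a\|$; in particular $\pi_{l,B}$ is norm-decreasing. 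The homomorphism property passes coordinatewise: since each $\pi_{l,h}$ is a homomorphism by Lemma~\ref{zxc}(i), $\pi_{l,B}(ab)=\big((ab)_{l,h}\big)=\big(a_{l,h}\circ b_{l,h}\big)=\pi_{l,B}(a)\circ\pi_{l,B}(b)$, and $\pi_{l,B}$ is precisely the direct sum over $h\in B$ of the $\pi_{l,h}$, which is what ``universal relative to $B$'' means here. For (v) the same computation with $\pi_{r,h}$ yields a norm-decreasing anti-representation, using that each $\pi_{r,h}$ is an anti-homomorphism by Lemma~\ref{zxc}(ii).

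Because every step reduces to the componentwise estimates already proved in Lemma~\ref{zxc}, there is no genuine obstacle. The only point requiring a little care is the completeness of the $\ell^2$-direct sum in (i)/(ii) over the possibly uncountable index set $B$, which I would handle by the coordinatewise (countable-support) argument indicated above rather than by a naive term-by-term passage to the limit.
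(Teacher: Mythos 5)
Your proposal is correct and takes exactly the paper's own route: the paper's entire proof of this proposition is ``It is similar to Lemma~\ref{zxc}'', i.e.\ re-run the construction of Lemma~\ref{zxc} with the index set $\mathfrak{A}^*$ replaced by the subspace $B$, which is precisely what you do, using the same componentwise ingredients (the bound $\|a_{l,h}\|\leq\|a\|$, the coordinatewise homomorphism/anti-homomorphism property, and the $\ell^2$-estimate for the norm). Your additional remark on completeness of the $\ell^2$-direct sum over a possibly uncountable index set merely fills in a detail the paper leaves implicit, so there is no substantive difference between the two arguments.
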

\begin{proof}It is similar to \ref{zxc}.
\end{proof}
\begin{corollary}
Let $\mathfrak{A}$ be a Banach algebra and $B=WAP(\mathfrak{A})$ be
non-trivial. Then there exist a Banach left [resp. right] $\mathfrak{A}$- module $\mathfrak{A}_{l,B}$[resp. $\mathfrak{A}_{r,B}$ ] with a
left [resp. right] norm-decreasing representation $\pi_l:\mathfrak{A}\rightarrow B(\mathfrak{A}_{l,B})$[resp. anti- representation
$\pi_r:\mathfrak{A}\rightarrow B(\mathfrak{A}_{r,B})$].
\end{corollary}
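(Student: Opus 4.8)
The plan is to obtain this Corollary as the specialization of the preceding Proposition to the concrete weight space $B = WAP(\A)$, and then to strengthen the ``universal representation'' statement there to a norm-decreasing one by re-running the submultiplicativity estimate from the proof of the Theorem.

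First I would confirm that the hypotheses of the Proposition are satisfied: $WAP(\A)$ is a (closed) linear subspace of $\A^*$, and it is non-trivial by assumption, so we may legitimately put $B = WAP(\A)$. Parts (i)--(ii) of the Proposition then produce the $\ell^2$-direct sums $\A_{l,B} = \oplus_{h \in B} \A_{l,h}$ and $\A_{r,B} = \oplus_{h \in B} \A_{r,h}$ as Banach spaces, while parts (iii) and (v) supply the representation $\pi_{l,B} : \A \to B(\A_{l,B})$ and the anti-representation $\pi_{r,B} : \A \to B(\A_{r,B})$.

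Next I would record the module structure together with the norm-decreasing estimate. For $a \in \A$ and $(b_h) \in \A_{l,B}$ set $a.(b_h) := (\pi_{l,h}(a) b_h)$ coordinatewise. By Lemma~\ref{zxc}(i) each $\pi_{l,h}$ is norm-decreasing, so $\|\pi_{l,h}(a) b_h\|_{l,h} \le \|a\|\,\|b_h\|_{l,h}$ for every $h \in B$; squaring, summing over $h \in B$, and taking square roots gives $\|a.(b_h)\|_{l,B} \le \|a\|\,\|(b_h)\|_{l,B}$. This is precisely the computation in the proof of the Theorem, now carried out over $B = WAP(\A)$ in place of all of $\A^*$, and it shows at once that $\A_{l,B}$ is a Banach left $\A$-module and that $\pi_l := \pi_{l,B}$ is norm-decreasing. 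The right-handed assertion is symmetric, with $\pi_{r,B}$ furnishing the required norm-decreasing anti-representation.

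Since every ingredient has already been established, I do not expect a genuine obstacle here; the only point deserving attention is that the fibrewise contraction bound carries the single constant $\|a\|$, independent of $h$, which is exactly what allows the estimate to survive the passage from the individual fibre norms $\|\cdot\|_{l,h}$ to the $\ell^2$-direct-sum norm $\|\cdot\|_{l,B}$. As this uniformity is already present in the Theorem's argument, the whole proof transfers verbatim with $B$ substituted for $\A^*$.
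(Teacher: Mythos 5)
Your proposal is correct and follows essentially the same route as the paper: the paper offers no separate proof of this corollary, treating it exactly as you do, namely as the specialization of the preceding Proposition to $B=WAP(\mathfrak{A})$ together with the norm-decreasing estimate $\|a.(b_h)\|_{l,B}\leq\|a\|\,\|(b_h)\|_{l,B}$ lifted verbatim from the Theorem's proof. Your observation that the fibrewise bound $\|\pi_{l,h}(a)\|\leq\|a\|$ from Lemma~\ref{zxc}(i) is uniform in $h$ is precisely what makes the $\ell^2$-sum estimate go through.
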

We now state the main result of this paper.
\begin{corollary}
Let $\mathfrak{A}$ be a Banach algebra and $B\subseteq O(\mathfrak{A})$ be
a subspace of $\mathfrak{A}^*$such that separates the points of $\mathfrak{A}$.  Then there exist a reflexive  Banach space $E$ with a norm-decreasing representation $\pi:\mathfrak{A}\rightarrow B(E)$.
\end{corollary}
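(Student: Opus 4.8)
The plan is to realise $\mathfrak{A}$ inside the bounded operators on an $\ell^2$-direct sum of the reflexive component spaces attached to the functionals in $B$, that is, exactly the spaces produced in the preceding Proposition, and to read off reflexivity from Lemma~\ref{autoperiodic} and faithfulness from the separation hypothesis.

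First I would fix, for each $h\in B$, the component space dictated by the type of autoperiodicity. Since $h\in O(\mathfrak{A})$, it is right-autoperiodic or left-autoperiodic, and Lemma~\ref{autoperiodic} (together with its evident left analogue) says that the corresponding quotient-completion $\mathfrak{A}_{r,h}$, respectively $\mathfrak{A}_{l,h}$, is reflexive. Thus to every $h\in B$ I attach a reflexive space $E_h$, taking $E_h=\mathfrak{A}_{l,h}$ when $h$ is left-autoperiodic and $E_h=\mathfrak{A}_{r,h}$ when $h$ is right-autoperiodic, and I set $E:=(\oplus_{h\in B}E_h)_{\ell^2}$ as in the Proposition. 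An $\ell^2$-direct sum of reflexive spaces is reflexive, so $E$ is reflexive.

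On each summand a norm-decreasing map from $\mathfrak{A}$ is already available by Lemma~\ref{zxc}: on the left summands $\pi_{l,h}$ is a representation, while on the right summands $\pi_{r,h}$ is only an \emph{anti}-representation. To assemble a single honest representation I would replace each right summand $\mathfrak{A}_{r,h}$ by its dual $\mathfrak{A}_{r,h}^{*}$ and each $\pi_{r,h}$ by its adjoint $a\mapsto \pi_{r,h}(a)^{*}$; the adjoint of an anti-homomorphism is a homomorphism, $\mathfrak{A}_{r,h}^{*}$ is reflexive exactly when $\mathfrak{A}_{r,h}$ is, and $\|\pi_{r,h}(a)^{*}\|=\|\pi_{r,h}(a)\|\le\|a\|$, so this converts every coordinate into a norm-decreasing representation on a reflexive space without disturbing reflexivity of $E$. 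Letting $\pi(a)$ act coordinatewise then yields $\pi:\mathfrak{A}\to B(E)$ with $\|\pi(a)\|=\sup_{h}\|\pi_h(a)\|\le\|a\|$, i.e.\ norm-decreasing. Faithfulness follows from separation: given $a\neq0$ pick $h\in B$ with $\langle a,h\rangle\neq0$; unwinding the definitions, $\pi_h(a)=0$ forces $h$ to annihilate a double product $yax$ (or $xay$) for all $x,y\in\mathfrak{A}$, and evaluating at the identity gives $\langle a,h\rangle=0$, a contradiction, so $\pi(a)\neq0$.

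The step I expect to be the real obstacle is the bookkeeping forced by $O(\mathfrak{A})=O_l(\mathfrak{A})\cup O_r(\mathfrak{A})$: because a subspace $B$ may contain both left- and right-autoperiodic functionals while only $\pi_{l,h}$ is a genuine homomorphism, one must pass to adjoints on the right-hand coordinates and verify that this spoils neither reflexivity, the norm-decrease, nor the injectivity coming from point-separation. If one prefers to sidestep the adjoint maneuver, an alternative is to note that every autoperiodic functional is weakly almost periodic, so $B\subseteq WAP(\mathfrak{A})$ already separates the points and the reflexive representation follows from the Young--Kaijser theorem; but the direct $\ell^2$-sum construction above is what keeps the argument internal to the GNS-type machinery developed here.
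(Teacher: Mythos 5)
Your construction is essentially the proof the paper intends (the paper in fact prints no argument for this corollary): take the preceding Proposition's $\ell^2$-direct sum over $h\in B$ of the spaces $\mathfrak{A}_{l,h}$, $\mathfrak{A}_{r,h}$, invoke Lemma~\ref{autoperiodic} (and its left-handed twin) to see each summand is reflexive, and let $\mathfrak{A}$ act coordinatewise. You go beyond the paper in two places, both to its benefit: you notice that on right-autoperiodic coordinates $\pi_{r,h}$ is only an anti-representation and repair this by passing to the adjoint action on $\mathfrak{A}_{r,h}^{*}$ (which preserves reflexivity and the norm bound), a point the paper silently glosses over since its Proposition produces $\pi_{l,B}$ and $\pi_{r,B}$ separately and never mixes types; and you actually use the separation hypothesis to get faithfulness, which is what the abstract's ``imbedded in $B(E)$'' claim requires even though the corollary's literal wording asks only for a norm-decreasing representation.

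One caveat: your faithfulness step (``evaluating at the identity'') needs $\mathfrak{A}$ to be unital, since $\pi_h(a)=0$ only tells you $h$ kills all products $yax$, and without a unit (or at least a bounded approximate identity) this does not contradict $\langle a,h\rangle\neq 0$; indeed, for an algebra with zero multiplication every functional is autoperiodic yet every such representation vanishes. This matches the standing assumption made at the start of the paper's introduction, but it is worth stating explicitly. Your fallback route --- $O(\mathfrak{A})\subseteq WAP(\mathfrak{A})$ plus the Young--Kaijser theorem cited in the introduction --- is also valid and is the quickest external proof, though, as you say, it abandons the internal GNS-type machinery the section is built around.
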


\indent

\noindent{{\bf Acknowledgments.}}  This research was supported by
the Center of Excellence for Mathematics at Isfahan university.
\bibliographystyle{amsplain}

\begin{thebibliography}{99}







\bibitem{D}H.G. Dales, {\em  Banach Algebras and Automatic Continuity,} Clarendon Press, Oxford, 2000.

\bibitem{DL} H.G. Dales \and  A.T.-M. Lau, {\em The Second Dual of Beurling Algebras,} Mem. Amer. Math.
Soc. \textbf{177} (2005), no. 836.







\bibitem{Da2}M. Daws ,  {\em Dual Banach algebras: Representation and injectivity,} Studia Math, \textbf{178} (2007), 231-275.

\bibitem{Bkh} H.R. Ebrahimi Vishki,   B. Khodsiani, A. Rejali,  {\em Weighted semigroup measure algebra as a  WAP-algebra,} Preprint.

\bibitem{GNS}I. M. Gelfand and M. A. Neumark,{\em On the imbedding of normed rings into the ring of
operators in Hilbert space}, Mat. Sb.\textbf{ 12}, 197-213.

\bibitem{Bkh1}    B. Khodsiani, A. Rejali,  {\em Weakly almost periodic BAanach algebras on semigroups,} Preprint.

\bibitem{Seg}I. E. Segal,{\em Irreducible representations of operator algebras}, Bull. Amer. Math. Soc.
\textbf{53} A947, 73-88.


\bibitem{kas}S. Kaijser,{\em On Banach modules},  I. Math. Proc. Cambridge Philos. Soc. \textbf{90} (1981)


\bibitem{palmer}T.W.Palmer, {\em Banach Algebras and The General Theory of $*$-Algebras},Volume I:Algebras and Banach Algebras.






\bibitem{y1}N.J.Young, {\em Periodicity of functionals and representations of normed algebras on reflexive spaces,} Proc. Edinburgh Math. Soc.(2){\bf20} (1976/77), 99-120.

\end{thebibliography}

\vspace{7mm}

 {\footnotesize
 \noindent
  B.Khodsiani and A. Rejali\\
  Department of Mathematics,
   University of Isfahan,
    Isfahan, Iran\\
    b\_khodsiani@sci.ui.ac.ir\\rejali@sci.ui.ac.ir

\end{document}